\theoremstyle{definition}
\newtheorem{thm}{Theorem}
\newtheorem{lem}{Lemma}[section]
\newtheorem{rem}[lem]{Remark}
\newtheorem{prop}[lem]{Proposition}
\newtheorem{clm}{Claim}
\newtheorem{exa}{Example}
\newtheorem{case}{Case}
\newcommand{\periodafter}[1]{#1.}
\titleformat{\subsection}[runin]
{\normalfont\bfseries}{\thesubsection}{0.5em}{\periodafter}
\numberwithin{equation}{section}
\def\W{\mathcal{W}}
\newcommand{\C}{\mathbb{C}}
\newcommand{\N}{\mathbb{N}}
\newcommand{\Z}{\mathbb{Z}}
\def\a{\alpha}
\def\b{\beta}
\def\l{\lambda}
\def\HVir{\mathcal{L}}
\def\Vir{\mathrm{Vir}}
\title{Modules over the Heisenberg-Virasoro and $W(2,2)$ algebras}
\author{Hongjia Chen and Xiangqian Guo}
\date{ }
\begin{document}

\maketitle

\begin{abstract} In this paper, we consider the modules for the
Heisenberg-Virasoro algebra and the W algebra $W(2,2)$. We determine
the modules whose restriction to the Cartan subalgebra (modulo
center) are free of rank $1$ for these two algebras. We also
determine the simplicity of these modules. These modules provide new
simple modules for the W algebra $W(2,2)$.
\end{abstract}

\vskip 10pt \noindent {\em Keywords: Heisenberg-Virasoro algebra, W
algebra W(2,2), simple module.}

\vskip 5pt \noindent {\em 2000  Math. Subj. Class.:} 17B10, 17B20, 17B65, 17B66, 17B68

\vskip 10pt
\section{Introduction}

Let $\C, \Z, \Z_+, \N$ be the sets of all complexes, all integers,
all non-negative integers and all positive integers, respectively.
The \textbf{Virasoro algebra} $\Vir$ is an infinite dimensional Lie
algebra over the complex numbers $\C$, with basis $\{L_n,C\,\, |\,\,
n \in \Z\}$ and defining relations
$$
[L_{m},L_{n}]=(n-m)L_{n+m}+\delta_{n,-m}\frac{m^{3}-m}{12}C, \quad
m,n \in \Z,
$$
$$
[C, L_{m}]=0, \quad m \in \Z,
$$ which is the universal central extension
of the so-called infinite dimensional \textbf{Witt algebra} of rank
$1$.

The theory of weight Virasoro modules with finite-dimensional weight
spaces is fairly well developed (see \cite{KR} and references
therein). In 1992, O. Mathieu \cite{M} classified all simple
Harish-Chandra modules, that is, simple modules with
finite-dimensional weight spaces, over the Virasoro algebra, which
is conjectured by Kac \cite{K}. Recently, many authors constructed
many classes of simple non-Harish-Chandra modules, including simple
weight modules with infinite-dimensional weight spaces ( see
\cite{CGZ, CM, LLZ, LZ2, Z}) and simple non-weight modules (see
\cite{BM, LGZ, LLZ, LZ1, MW, MZ1, OW, TZ1, TZ2, TZ3, TZ4}).

For $\lambda \in \C^*=\C\setminus\{0\}, \a\in \C,$ denote by
$\Omega(\lambda,\a)=\C[t]$ the polynomial algebra over $\C$. In
\cite{LZ1} the $\Vir$-module structure on $\Omega(\lambda,\a)$ is
given by
$$
C t^i=0, L_m t^i=\lambda^m (t-ma)(t-m)^i,\quad m\in \Z, i\in \Z_+.
$$
From \cite{LZ1} we know that $\Omega(\lambda,\a)$ is simple if and
only if $\lambda,\a\in \C^*.$ If $\a=0,$ then $\Omega(\lambda,0)$
has an simple submodule $t \Omega(\lambda,0)$ with codimension $1.$

Recently Tan and Zhao \cite{TZ4} showed that the above defined
modules $\Omega(\l,\a)$ are just those $\Vir$-modules that are free
of rank $1$ when restricted to the subalgebra $\C L_0$. In fact they
did similar work for the Witt algebras of all ranks: classifying all
Witt algebra modules that are free of rank $1$ when restricted to
the Cartan subalgebra of the Witt algebra. This kind of problems
originated from an earlier work \cite{N} of J. Nilson who determined
the $\mathfrak{sl}_n$-modules that are free of rank $1$ when
restricted to the Cartan subalgebra of $\mathfrak{sl}_{n}$, where
$n\geq 2$ is a positive integer.

In the present paper, we will follow their ideas to consider similar
questions for the Heisenberg-Virasoro algebra and the W algebra
$W(2,2)$. The Heisenberg-Virasoro algebras was introduced and
studied in \cite{ACKP} and for more results for this algebra please
refer to \cite{B, LZ3, CG} and references therein. The W algebra
$W(2,2)$ was first introduced and studied in \cite{ZD} and for more
information about this algebras please refer to \cite{GLZ, LiZ}.

\section{Representations over the Heisenberg-Virasoro algebra}

The \textbf{Heisenberg-Virasoro algebra $\HVir$} is the universal
central extension of the Lie algebra $\{f(t)\frac{d}{dt} + g(t)\ |\
f, g\in \C[t^{\pm 1}]\}$ of differential operators of order at most
one. More precisely, it is the complex Lie algebra that has a basis
$\{\,L_n,\,I_n,\,C_1,C_2, C_3 \,|\,n\in \Z\,\}$ subject to the
following Lie brackets:
\begin{equation*}
\begin{split}
[L_m,L_n] & =(n-m)L_{m+n}+\frac{m^3-m}{12}\delta_{m+n,0}C_1\\
[L_m,I_n] &=nI_{m+n}+\delta_{m+n,0}(m^2+m)C_2  \\
[I_m,I_n] &=m\delta_{m+n,0}C_3
\end{split}\quad\quad \forall\ m,n\in\Z
\end{equation*}
where $C_1, C_2, C_3$ and $I_0$ span the center of $\HVir$. The Lie
subalgebra spanned by $\{L_n, C_1 \,|\, n \in \Z\}$ is just the
Virasoro algebra.

In this section we will determine the Heisenberg-Virasoro algebra
modules which are free of rank $1$ when regarded as a $\C
L_0$-module. For convenience, we cite the following result in
\cite{TZ4}.

\begin{thm}  \label{Vir}
Let $M$ be a $U(\Vir)$-module such that the restriction of $U(\Vir)$
to $U(\C L_0)$ is free of rank $1$, that is, $M=U(\C L_0)v$ for some
torsion-free $v\in M$. Then $M\cong \Omega(\lambda,\a)$ for some
$\a\in \C, \lambda\in \C^*$. Moreover, $M$ is simple if and only if
$M\cong \Omega(\lambda,\a), \lambda,\a\in \C^*$.
\end{thm}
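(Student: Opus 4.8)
The plan is to realize $M$ concretely as the polynomial algebra and read off the whole action from finitely many data. Fix the torsion‑free generator $v$; since $U(\C L_0)=\C[L_0]$, the map $f(L_0)\mapsto f(L_0)v$ identifies $M$ with $\C[t]$, with $v\leftrightarrow 1$ and $L_0$ acting as multiplication by $t$. For each $m$ set $g_m(t):=L_m\cdot 1\in\C[t]$, so $g_0(t)=t$. From $[L_0,L_m]=mL_m$, read as an operator identity on $M$, one gets $L_m(tf)=(t-m)L_m(f)$, hence by induction
\[
L_m\big(f(t)\big)=f(t-m)\,g_m(t),\qquad f\in\C[t].
\]
Because $C$ is central it commutes with $L_0$, so $C$ acts as multiplication by some $h(t)\in\C[t]$; then $[L_m,C]=0$ forces $h(t-m)=h(t)$, so $C$ acts by a scalar $c$. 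Substituting the displayed formula into $[L_m,L_n]=(n-m)L_{m+n}+\delta_{m+n,0}\tfrac{m^3-m}{12}C$ and evaluating at $1$ converts every bracket relation into the polynomial identity
\[
g_n(t-m)\,g_m(t)-g_m(t-n)\,g_n(t)=(n-m)\,g_{m+n}(t)+\delta_{m+n,0}\tfrac{m^3-m}{12}\,c .
\]
So the module is encoded by the family $\{g_m\}$ subject to these identities, and the goal is to show $g_m(t)=\lambda^m(t-m\a)$ for some $\lambda\in\C^*,\ \a\in\C$, and $c=0$; this is exactly the action defining $\Omega(\lambda,\a)$ recalled from \cite{LZ1}.

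Next I would pin down $g_{\pm1}$. First note $g_1,g_{-1}\ne 0$ (otherwise the identity from $[L_1,L_{-1}]=-2L_0$ reads $0=-2t$). Comparing the two subleading coefficients in that same identity gives $\deg g_1+\deg g_{-1}=2$. The cases $\deg g_1\in\{0,2\}$ are then excluded by feeding the resulting shapes into the identities coming from $[L_{-1},L_2]=3L_1$ and $[L_1,L_{-2}]=-3L_{-1}$: a degree count forces $g_2=0$, resp.\ $g_{-2}=0$, contradicting those very identities. Hence $\deg g_1=\deg g_{-1}=1$, and matching coefficients in the $[L_1,L_{-1}]$ identity yields $g_1(t)=\lambda(t-\a)$, $g_{-1}(t)=\lambda^{-1}(t+\a)$ with $\lambda\in\C^*$, $\a\in\C$. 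The relations above then force $g_{\pm2}(t)=\lambda^{\pm2}(t\mp2\a)$, and an induction on $|n|$ using $[L_n,L_1]=(1-n)L_{n+1}$ for $n\ge2$ and $[L_n,L_{-1}]=(-1-n)L_{n-1}$ for $n\le-2$ gives $g_n(t)=\lambda^n(t-n\a)$ for all $n\in\Z$. Finally, plugging $g_{\pm2}$ into the identity from $[L_2,L_{-2}]=-4L_0+\tfrac12 C$ forces $c=0$. This identifies the $\Vir$‑action on $M$ with that on $\Omega(\lambda,\a)$, and as $\lambda\ne0$ is automatic, the first clause of the ``moreover'' is immediate. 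I expect the bookkeeping in this paragraph — excluding $\deg g_{\pm1}\ne1$ and checking the induction genuinely closes up from the finitely many seed relations among $L_0,L_{\pm1},L_{\pm2}$ — to be the only real technical obstacle; the rest is direct polynomial algebra.

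For simplicity, work with $M\cong\Omega(\lambda,\a)$ realized as $\C[t]$ with $L_m(f)=\lambda^m(t-m\a)f(t-m)$ and $C=0$. If $\a=0$ then $L_m(f)=\lambda^m t\,f(t-m)\in t\C[t]$ for every $f$, so $t\C[t]$ is a submodule of codimension $1$ and $M$ is not simple. If $\a\ne0$, let $N\ne0$ be a submodule and pick $0\ne g\in N$ with $k:=\deg g$. Viewing $(t-m\a)\,g(t-m)$ as a polynomial \emph{in $m$}, it has degree exactly $k+1$ with leading coefficient the nonzero constant $-\a(-1)^k\cdot(\text{leading coeff.\ of }g)$; hence the $(k+1)$‑st finite difference
\[
Z:=\sum_{m=0}^{k+1}(-1)^m\binom{k+1}{m}\lambda^{-m}\,L_m\cdot g\ \in\ N
\]
annihilates all lower powers of $m$ and collapses to a nonzero constant (namely $\a\,(\text{leading coeff.\ of }g)\,(k+1)!$). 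Thus $1\in N$; since $L_0$ acts as multiplication by $t$, we get $t^j=L_0^{\,j}\cdot1\in N$ for all $j$, so $N=\C[t]$ and $M$ is simple. Combining the two cases, $M$ is simple $\iff\a\ne0$, i.e.\ $\iff\lambda,\a\in\C^*$.
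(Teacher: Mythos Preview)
The paper does not actually prove this theorem: it is cited from \cite{TZ4} (``For convenience, we cite the following result in \cite{TZ4}''), so there is no in-paper proof to compare against. Your argument is nonetheless correct and self-contained, and in fact mirrors the method the paper employs in its own Theorems~2 and~3: identify $M$ with $\C[t]$ via $L_0\leftrightarrow t$, reduce the module structure to the family $g_m(t)=L_m\cdot 1$, and translate the bracket relations into polynomial identities. Your degree analysis for $g_{\pm1}$, the elimination of $\deg g_1\in\{0,2\}$ via $[L_{\mp1},L_{\pm2}]$, and the induction from $g_{\pm1},g_{\pm2}$ to all $g_n$ are all sound; the computation $c=0$ from $[L_2,L_{-2}]$ is straightforward. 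For the simplicity half, your finite-difference trick (the $(k{+}1)$-st difference of the degree-$(k{+}1)$ polynomial $m\mapsto(t-m\alpha)g(t-m)$ is the nonzero constant $\alpha(k{+}1)!\cdot\mathrm{lead}(g)$) is a clean way to produce $1\in N$ when $\alpha\neq0$, and the codimension-one submodule $t\C[t]$ when $\alpha=0$ is immediate. One minor remark: once you have pinned down every $g_m$, you do not need to verify the remaining Virasoro relations by hand, since $\Omega(\lambda,\alpha)$ is already known to be a module from \cite{LZ1}; the identification of actions suffices.
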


The $\Vir$-module $\Omega(\l,\a)$ can be naturally made into an
$\HVir$-module.

\begin{exa}\label{ex-HVir}
For $\lambda \in \C^*,\a,\b \in \C,$ denote by
$\Omega(\lambda,\a,\b)=\C[t]$ the polynomial algebra over $\C$. We
can define the $\HVir$-module structure on $\Omega(\lambda,\a,\b)$
as follows
$$
C_j f(t)=0, \; L_m f(t)=\lambda^m (t-m\a)f(t-m), \; I_m f(t)=\b
\lambda^m  f(t-m)
$$
where $f(t) \in \C[t]$, $m\in \Z$ and $j=1,2,3$. From \cite{LZ1} it
is easy to know that $\Omega(\lambda,\a,\b)$ is simple if and only
if $\a\neq0$ or $\b\neq0$. If $\a=\b=0$, then $\Omega(\lambda,0,0)$
has an simple submodule $t \Omega(\lambda,0,0)$ with codimension
$1.$
\end{exa}

Now we have the main result of this section:

\begin{thm}Let $M$ be a $U(\HVir)$-module such that the restriction of $U(\HVir)$ to $U(\C L_0)$ is free of rank $1$.
Then $M \cong \Omega(\lambda,\a,\b)$ for some $\a, \b\in \C,
\lambda\in \C^*$. Moreover $M$ is simple if and only if $M\cong
\Omega(\lambda,\a,\b)$ for some $\lambda\in\C^*, \a, \b\in \C$ with
$\a\neq 0$ or $\b\neq 0$.
\end{thm}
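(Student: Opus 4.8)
The plan is to leverage Theorem~\ref{Vir} for the Virasoro subalgebra and then determine how the Heisenberg part $\{I_n\}$ must act. Let $M = U(\C L_0)v = \C[L_0]v$ be free of rank $1$. Since the Virasoro algebra sits inside $\HVir$ and $M$ is automatically a $U(\Vir)$-module that is free of rank $1$ over $U(\C L_0)$, Theorem~\ref{Vir} gives $M \cong \Omega(\lambda,\a)$ for some $\lambda\in\C^*$, $\a\in\C$; that is, identifying $M$ with $\C[t]$ via $v \mapsto 1$ and $L_0 \mapsto t$, we have $C_1 = 0$ and $L_m f(t) = \lambda^m (t-m\a) f(t-m)$. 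It remains to pin down $I_m f(t)$ and the central elements $C_2, C_3, I_0$.

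The key step is to analyze $I_m \cdot 1 =: h_m(t) \in \C[t]$ for each $m$. First I would use $C_3 = \frac1m[I_m,I_{-m}]\cdot\text{(acting)}$ and $C_2$ appearing in $[L_m,I_n]$: applying $[L_m,I_0]=0$ and $[L_0,I_n]=nI_n$ to $1$ shows $L_0(I_n 1) = (I_n L_0 - [L_0,I_n])$-type relations force $I_n 1$ to be an eigenvector-like expression, more precisely that $t \cdot h_n(t) - h_n(t)\cdot(\text{action of }L_0) = n h_n(t)$... the cleanest route is: from $[L_0, I_n] = n I_n$ acting on $1$, and $L_0 = t$, one gets $t\, h_n(t) - (\text{$L_0$ applied to }h_n) = n h_n(t)$; since $L_0$ acts on $\C[t]$ as $L_0 g(t) = \lambda^0(t-0)g(t) = t g(t)$, wait — one must be careful that $L_0 g(t) = t g(t)$ only for the generator, so actually $L_0$ acts as multiplication by $t$, giving $t h_n - t h_n = 0 = n h_n$, a contradiction unless we recompute: the correct relation is $L_0 I_n 1 = I_n L_0 1 + n I_n 1 = I_n(t\cdot 1) + n h_n$. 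The point is that $I_n(t \cdot 1) = I_n L_0 1$ must be computed from $I_n$ being a module endomorphism twisted appropriately, so I would instead determine $I_n$ on all of $\C[t]$ by first establishing $I_n L_m 1$ two ways via $[I_n,L_m]$, deducing a recursion that expresses $h_m$ in terms of $h_0$ and forces $h_m(t) = \b\lambda^m$ for a constant $\b = I_0 \cdot 1 \in \C[t]$ which the relation $[L_m,I_0]=0$ pins to be a scalar, and the relation $[I_m,I_n] = m\delta_{m+n,0}C_3$ then forces $C_3 = 0$ (since $I_m, I_n$ act as scalar multiples of the shift $f(t)\mapsto f(t-\text{shift})$, their commutator applied to $1$ is a constant that must vanish), and similarly $[L_m,I_n]$ with the $C_2$ term forces $C_2 = 0$.

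Concretely, the main steps in order: (1) invoke Theorem~\ref{Vir} to fix the $L_m$-action and $C_1=0$; (2) show $I_0$ acts as a scalar $\b$ using centrality of $I_0$ and freeness (an element commuting with $L_0$-action and lying in $\C[t]$... actually $I_0 1 \in \C[t]$ and $[L_1,I_0]=0$ applied to $1$ gives a functional equation forcing $I_0 1 = \b$ constant); (3) derive $I_m 1$ from $I_m 1 = \frac1m [L_0, I_m] 1$-style identities combined with $[L_n, I_m]$ relations to get $I_m f(t) = \b \lambda^m f(t-m)$; (4) check consistency, which yields $C_2 = C_3 = 0$; hence $M \cong \Omega(\lambda,\a,\b)$. (5) For simplicity: $\Omega(\lambda,\a,\b)$ simple $\iff$ $\a\neq 0$ or $\b\neq 0$ by Example~\ref{ex-HVir}; conversely a free-rank-one module of the form $\Omega(\lambda,0,0)$ is visibly not simple as it has the codimension-one submodule $t\C[t]$.

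The main obstacle I expect is step (3): carefully extracting the action of $I_m$ on arbitrary polynomials $f(t)$ (not just on the cyclic generator $1$) from the bracket relations, since a priori $I_m$ could act by some operator that agrees with $\b\lambda^m(\text{shift})$ only on $1$. Resolving this requires showing that $M$ being \emph{cyclic} as a $\C[L_0]$-module, together with the mixed relations $[L_m,I_n] = nI_{m+n} + \delta_{m+n,0}(m^2+m)C_2$, propagates the formula $I_n 1 = \b\lambda^n$ to $I_n L_{m_1}\cdots L_{m_k} 1$ by induction on $k$, and simultaneously pins down the central terms; the bookkeeping with the $(m^2+m)C_2$ correction is the delicate part, but it is ultimately a finite computation forcing $C_2=0$.
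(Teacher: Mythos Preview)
Your overall strategy matches the paper's: reduce to the Virasoro case via Theorem~\ref{Vir}, then determine the $I_m$-action and the remaining central scalars. However, you have misidentified the main difficulty. You worry about propagating $I_n(1)$ to $I_n$ on arbitrary elements and propose an induction over products $L_{m_1}\cdots L_{m_k}\cdot 1$; but $M$ is free of rank $1$ over $\C[L_0]$ (not merely cyclic over $U(\Vir)$), so every element is already of the form $f(L_0)\cdot 1$ for a single polynomial $f$. The relation $[L_0,I_m]=mI_m$ alone gives $I_m L_0^i=(L_0-m)^iI_m$ in the enveloping algebra, hence
\[
I_m\bigl(f(t)\bigr)=I_m\bigl(f(L_0)\cdot 1\bigr)=f(L_0-m)\,I_m(1)=f(t-m)\,I_m(1),
\]
which is the paper's equation~\eqref{claim1}. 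This dissolves your ``main obstacle'' in one line and reduces the problem to determining the single polynomial $I_m(1)\in\C[t]$ for each $m$. Your confused computation in the middle (``$t h_n - t h_n = 0 = n h_n$'') is a symptom of not having this identity in hand.

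From there the paper's argument is also more direct than your sketch. Rather than first isolating $I_0$ via $[L_1,I_0]=0$ and then bootstrapping, the paper computes $[L_{-m},I_m](1)$ and compares leading $t$-coefficients to force $\deg I_m(1)=0$ for every $m$ at once. Writing $I_m(1)=a_m\in\C$, the bracket $[L_m,I_n](1)$ gives the recursion
\[
na_{m+n}+\delta_{m+n,0}(m^2+m)c_2=na_n\lambda^m,
\]
and taking $m=-1$ yields $a_n=\lambda a_{n-1}$, whence $a_m=c_0\lambda^m$ and $c_2=0$; finally $c_3=[I_1,I_{-1}](1)=0$. Your plan would eventually arrive at the same conclusions, but with unnecessary detours through mixed products of the $L_m$.
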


\begin{proof} The second result follows from the first result and
Example \ref{ex-HVir}. We now prove the first result.

Denote the action of $I_0$, $C_1$, $C_2$, $C_3$ by $c_0$, $c_1$,
$c_2$, $c_3$, respectively. By Theorem \ref{Vir}, we have $M\cong
\Omega(\lambda,\a)=\C[t]$ as $\Vir$-modules, where $\l, \a\in\C,
\l\neq 0$ and $t$ is an indeterminant. Hence we can assume that
$c_1=0$ and
$$
L_m f(t)=\lambda^m (t-m\a)f(t-m),\quad m\in \Z, f(t) \in \C[t].
$$

Now we consider the action of $I_m$'s. First we can easily get that
\begin{equation}  \label{claim1}
I_m(f(t))=I_m(f(L_0)(1))=f(t-m)I_m(1),\quad\forall\  m\in \Z.
\end{equation}

\begin{case}
$I_m(1) = 0$ for some $m \in \Z^*$.
\end{case}

By \eqref{claim1} we know that $I_m(M)=0$. Now following from the
defining relations of $\HVir$, we have $I_n(M) = 0$ for all $n \in
\Z^*$ and $c_0=c_2=c_3=0$. In this case $M\cong \Omega(\l, \a, 0)$.

\begin{case}
$I_m(1) \neq 0$ for all $m \in \Z^*$.
\end{case}

Assume that
\begin{equation*}
I_m(1) = f_m(t) = \sum_{i=0}^{k_m}b_{m,i} t^i,\quad\forall\ m\in\Z,
\end{equation*}
where $b_{m,i} \in \C$ and $b_{m,k_m} \neq 0$. Now we calculate
$[L_{-m},I_m](1)$ as follows
\begin{equation*}
\begin{split}
[L_{-m},I_m](1)
= & L_{-m}I_m(1)-I_mL_{-m}(1)  \\
= & L_{-m}f_m(t)-I_m\lambda^{-m} (t+m\a) \\
= & \lambda^{-m} (t+m\a)f_m(t+m)-\lambda^{-m} (t-m+m\a)f_m(t) \\
= & \lambda^{-m}m(k_m+1)b_{m,k_m}t^{k_m} + \text{ lower-degree terms
w.r.t. } t,
\end{split}
\end{equation*}
which implies that $k_m=0$ for all $m \in \Z$. Hence we have
$I_m(1)=a_m \in \C$ for all $m \in \Z$ with $a_0=c_0$. Now
$[I_m,I_n](1)=m\delta_{m+n,0}c_3$ implies that
\begin{equation}
c_3=[I_1,I_{-1}](1) = a_1a_{-1}- a_{-1} a_1=0
\end{equation}
and
$[L_m,I_n](1) =nI_{m+n}(1)+\delta_{m+n,0}(m^2+m)c_2 $ implies
\begin{equation}  \label{am-c2}
na_{m+n}+\delta_{m+n,0}(m^2+m)c_2=a_n\l^m(t-m\a)-a_n\l^m(t-n-m\a)=na_n\l^m.
\end{equation}
Taking $m=-1$ in \eqref{am-c2}, we have $a_n=\l a_{n-1}$ for all $n \neq 0$.
Hence $c_2=0$.
Then checking the recurrence relations $na_{m+n}=na_n\l^m$,
we have that $a_m=c_0 \l^m$.

The above discussion allows us to establish an $\HVir$-module homomorphism
\begin{equation*}
\begin{split}
M \longrightarrow \Omega(\lambda,\a,c_0),\quad\quad t^i \mapsto t^i.
\end{split}
\end{equation*}
Clearly, this is an $\HVir$-module isomorphism. 
This completes the proof.
\end{proof}

\section{Representations over the $W$-algebra $W(2,2)$}

Let $\W$ denote the complex \textbf{Lie algebra $W(2,2)$}, that has
a basis $\{\,L_n,\,W_n,\,C_1,C_2\,|\,n\in \Z\,\}$ and the Lie
brackets defined by:
\begin{equation*}
\begin{split}
[L_m,L_n] & =(n-m)L_{m+n}+\frac{m^3-m}{12}\delta_{m+n,0}C_1\\
[L_m,W_n] &=(n-m)W_{m+n}+\frac{m^3-m}{12}\delta_{m+n,0}C_2  \\
[W_m,W_n] &=0
\end{split}
\end{equation*}
where $C_1$ and $C_2$ are central in $\W$. The Lie subalgebra
spanned by $\{L_n, C_1 \,|\, n \in \Z\}$ is the Virasoro algebra.

\subsection{Rank $1$ free $U(\C L_0)$-modules}

The $\Vir$-module $\Omega(\l,\a)$ can be made into a $\W$-module:

\begin{exa}
For $\lambda \in \C^*, \a \in \C,$ denote by
$\Omega_{\W}(\lambda,\a)=\C[t]$ the polynomial algebra over $\C$. We
can define the $\W$-module structure on $\Omega_{\W}(\lambda,\a)$ as
follows
$$
C_j f(t)=W_m f(t)=0, \; L_m f(t)=\lambda^m (t-m\a)f(t-m)
$$
where $f(t) \in \C[t]$, $m\in \Z$ and $j=1,2$. From \cite{LZ1} it is
easy to know that $\Omega_{\W}(\lambda,\a)$ is simple if and only if
$\a\neq0$. If $\a=0$, then $\Omega_{\W}(\lambda,0)$ has an simple
submodule $t \Omega_{\W}(\lambda,0)$ with codimension $1.$

\end{exa}

In this subsection, we will classify the modules over the Lie algebra $\W$
such that the modules are free $U(\C L_0)$-modules of rank $1$. We have

\begin{thm} Let $M$ be a $U(\W)$-module such that the restriction of $U(\W)$ to $U(\C L_0)$ is free of rank $1$.
Then $M \cong \Omega_{\W}(\lambda,\a)$ for some $\a\in \C,
\lambda\in \C^*$. Moreover, $M$ is simple if and only if $M\cong
\Omega_{\W}(\lambda,\a), \lambda, \a\in \C^*$.
\end{thm}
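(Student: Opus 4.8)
The plan is to mimic the structure of the proof of the Heisenberg–Virasoro case. The second assertion will follow immediately from the first together with the simplicity criterion recorded in the example, so the real content is the classification. Since the Virasoro subalgebra $\{L_n, C_1\}$ acts on $M$, and $M$ is free of rank $1$ over $U(\C L_0)$, Theorem \ref{Vir} gives $M\cong\Omega(\l,\a)=\C[t]$ as $\Vir$-modules for some $\l\in\C^*$, $\a\in\C$, with $c_1=0$ and $L_m f(t)=\l^m(t-m\a)f(t-m)$. It remains to pin down the action of the $W_m$'s (and show $c_2=0$).

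First I would observe, exactly as in equation \eqref{claim1}, that since $W_m$ commutes with... well, it does not commute with $L_0$; rather $[L_0,W_m]=mW_m$, but the key point is $W_m(f(t)) = W_m(f(L_0)\cdot 1)$, and using $[L_0, W_m] = m W_m$ one gets $W_m f(L_0) = f(L_0 - m) W_m$ as operators, hence $W_m(f(t)) = f(t-m)W_m(1)$. So the entire action is determined by the polynomials $g_m(t):=W_m(1)\in\C[t]$. Next I would bound the degrees of the $g_m$. Writing $g_m(t)=\sum_{i=0}^{k_m} b_{m,i}t^i$ with $b_{m,k_m}\neq 0$ when $g_m\neq 0$, I compute $[L_{-m},W_m](1)$ in two ways: on one hand it equals $(2m)W_0(1) + \tfrac{m^3-m}{12}\delta_{0,0}\cdot(\text{something})$... more carefully, $[L_{-m},W_m] = 2mW_0 + \frac{(-m)^3-(-m)}{12}c_2\,\delta_{0,0}$, so $[L_{-m},W_m](1) = 2m\,g_0(t) - \frac{m^3-m}{12}c_2$; on the other hand, expanding $L_{-m}g_m(t) - W_m L_{-m}(1) = \l^{-m}(t+m\a)g_m(t+m) - \l^{-m}(t-m+m\a)g_m(t)$, the top-degree term in $t$ is $\l^{-m} m (k_m+1) b_{m,k_m} t^{k_m}$. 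Comparing, the left side has degree $\le \max(\deg g_0, 0)$ while the right side has degree $k_m$ with nonzero leading coefficient $\l^{-m}m(k_m+1)b_{m,k_m}$; this forces $k_m\le k_0$ for all $m$, and a small additional argument (e.g. also computing $[L_m, W_m]$ or iterating) should force all $g_m$ to have degree $\le$ that of $g_0$ and in fact pin down $g_0$ to be a constant — I expect the cleanest route is to first show $k_m = k_0$ is impossible unless $k_0=0$, paralleling the Heisenberg–Virasoro computation where the analogous bracket forced $k_m=0$.

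Once $g_m(t)=w_m\in\C$ is a scalar for every $m$, the remaining relations become a scalar recurrence. From $[W_m,W_n]=0$ applied to $1$ I get $g_m(t-n)w_n \cdot(\text{consistency})$, i.e. $w_n w_m = w_m w_n$, which is vacuous, so I must instead use $[L_m,W_n](1) = (n-m)W_{m+n}(1) + \frac{m^3-m}{12}\delta_{m+n,0}c_2$. The left side is $L_m(w_n) - W_n(\l^m(t-m\a)) = 0 - w_n\l^m\big((t-m\a)-(t-n-m\a)\big) = -n w_n \l^m$. Hence $(n-m)w_{m+n} = -nw_n\l^m$ for $m+n\neq 0$, and $(n-m)w_{0} + \frac{m^3-m}{12}c_2 = -nw_n\l^m$ when $m=-n$, i.e. $2n w_0 + \frac{-n^3+n}{12}c_2 = -n w_{-n}\l^{-n}$... wait, I'd set $m=-n$: $2n\,w_0 - \frac{n^3-n}{12}c_2 = -n w_n \l^{-n}$. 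Taking $m=-1$ in the generic relation gives $w_{n-1}(n+1)\cdot$... precisely $(n-(-1))w_{n-1} = (n+1)w_{n-1} = -n w_n \l^{-1}$, a first-order recurrence determining all $w_n$ from $w_1$ (or showing they all vanish), and substituting back into the $m=-n$ relation should force $c_2=0$ and yield $w_m\equiv 0$. The main obstacle, and the step I'd be most careful about, is the degree-reduction argument for the $g_m$: unlike the Heisenberg case where $[L_m,I_n]$ lands back among the $I$'s with a clean coefficient $n$, here the coefficient $(n-m)$ can vanish (when $m=n$) and the central term only appears when $m+n=0$, so I need to pick the brackets $[L_{-m},W_m]$ and perhaps $[L_{-2m},W_m]$ cleverly to get an unconditional degree bound; I expect this to work but it requires matching leading coefficients in a couple of cases. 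After that, the construction of the explicit isomorphism $t^i\mapsto t^i$ onto $\Omega_{\W}(\l,\a)$ is routine.
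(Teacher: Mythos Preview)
Your overall strategy is the paper's: invoke Theorem~\ref{Vir} for the $\Vir$-action, reduce the $W_m$-action to determining $g_m(t)=W_m(1)$, show each $g_m$ is a scalar, and then extract a recurrence from $[L_m,W_n]$. The genuine gap is precisely the step you flag as ``the main obstacle'': the degree reduction. Your proposed route via $[L_{-m},W_m]$ only yields $k_m\le k_0$, and the auxiliary relation $[L_m,W_m]=0$ you mention only forces $g_m(t)\in\C\cdot(t-m\a)$ for $m\neq 0$, not $g_m\in\C$; bounding $k_0$ itself would require yet more work. The paper's fix is exactly the relation you discarded as ``vacuous'': apply $[W_m,W_n]=0$ to $1$ \emph{before} assuming the $g_m$ are constants. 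Then
\[
0=[W_m,W_n](1)=g_n(t-m)g_m(t)-g_m(t-n)g_n(t),
\]
whose top-degree term is $b_{m,k_m}b_{n,k_n}(nk_m-mk_n)\,t^{k_m+k_n-1}$; for nonzero $m,n$ of opposite sign this forces $k_m=k_n=0$ in one stroke. You invoked $[W_m,W_n]$ only after already assuming $g_m=w_m\in\C$, at which point it is indeed vacuous; moving it earlier is the missing idea.

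There is also a computational slip at the scalar stage: $L_m(w_n)$ is \emph{not} zero. The scalar $w_n$ sits in $M=\C[t]$ as a constant polynomial, so $L_m(w_n)=w_n\l^m(t-m\a)$. With this correction $[L_m,W_n](1)=w_n\l^m\big((t-m\a)-(t-n-m\a)\big)=n\,w_n\l^m$ (positive sign), and the recurrence reads $(n-m)w_{m+n}+\delta_{m+n,0}\tfrac{m^3-m}{12}c_2=n\,w_n\l^m$. Taking $m=\pm 1$ gives $(n-1)w_{n+1}=nw_n\l$ and $(n+1)w_{n-1}=nw_n\l^{-1}$; evaluating at $n=1$ and $n=-1$ respectively yields $w_1=w_{-1}=0$, contradicting the standing assumption that all $W_m(1)\neq 0$. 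The paper organizes this as a dichotomy: either some $W_m(1)=0$, whence the bracket relations $[L_{n-m},W_m]=(2m-n)W_n$ propagate $W_nM=0$ to all $n$ and force $c_2=0$; or all $W_m(1)\neq 0$, which the recurrence above rules out.
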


\begin{proof}
Denote the action of $C_1$, $C_2$ by $c_1$, $c_2$, respectively. By
Theorem \ref{Vir}, $M\cong \Omega(\lambda,\a)=\C[t]$ as
$\Vir$-modules for some $\l\in\C^*$ and $\a\in\C$. Hence we can
assume that $c_1=0$ and
$$
L_m f(t)=\lambda^m (t-m\a)f(t-m), m\in \Z, f(t) \in \C[t].
$$

Now we consider the action of $W_m$'s. First we can easily get that
\begin{equation}  \label{claim2}
W_m(f(t))=W_m(f(L_0)(1))=f(t-m)W_m(1),\quad\forall\ m\in \Z.
\end{equation}

\setcounter{case}{0}

\begin{case}
$W_m(1) = 0$ for some $m \in \Z$.
\end{case}

By \eqref{claim2} we know that $W_m(M)=0$. Now following from the
definition relations of $\W$, we have $W_n(M) = 0$ for all $n \in
\Z$ and $c_2=0$. We see that $M$ is actually a $\Vir$-module and
hence $M\cong\Omega_{\W}(\l,\a)$ in this case.

\begin{case}
$W_m(1) \neq 0$ for all $m \in \Z$.
\end{case}

Assume that
\begin{equation*}
W_m(1) = f_m(t) = \sum_{i=0}^{k_m}b_{m,i} t^i
\end{equation*}
where $b_{m,i} \in \C$ and $b_{m,k_m} \neq 0$. For all $mn \leq 0$, we have
\begin{equation*}
\begin{split}
0= & [W_m,W_n](1) \\
= & W_mW_n(1)-W_nW_m(1)   \\
= & W_mf_n(t)-W_nf_m(t) \\
= & f_n(t-m)f_m(t)-f_n(t)f_m(t-n)  \\
= & b_{m,k_m}b_{n,k_n}(nk_m-mk_n) t^{k_m+k_n-1} + \text{
lower-degree terms w.r.t. } t.
\end{split}
\end{equation*}
Hence $k_m=0$ for all $m \in \Z$, i.e., $W_m(1)=a_m$ for some
$a_m\in\C^*$. Now
$$[L_m,W_n](1) =(n-m)W_{m+n}(1)+\delta_{m+n,0}\frac{m^3-m}{12}c_2 $$
implies
\begin{equation}  \label{2am-c2}
(n-m)a_{m+n}+\delta_{m+n,0}\frac{m^3-m}{12}c_2=a_n\l^m(t-m\a)-a_n\l^m(t-n-m\a)=na_n\l^m.
\end{equation}
Taking $m=-1$ and $m=1$ in \eqref{am-c2} respectively, we have
\begin{equation*}
(n+1)a_{n-1}=na_n\l^{-1}   \text{ and } (n-1)a_{n+1}=na_n\l.
\end{equation*}
Hence $a_n=0$, which is a contradiction. This case does not occur.
%
%
\end{proof}

\subsection{Rank $1$ free $U(\C L_0\oplus \C W_0)$-modules}


In this subsection, we will classify the modules over the $W(2,2)$
algebra $\W$ such that the modules are free $U(\C L_0 \oplus \C
W_0)$-modules of rank $1$. Before present the main result, we first
construct some modules with this property.

Fix any $\lambda \in \C^*$ and $\a \in \C$. For any $k \in \N$ and
$n \in \Z$, define the following polynomials
\begin{equation}
h_{n,k;\a}(t) = nt^k -n(n-1)\a \frac{t^k-\a^k}{t-\a}  \in \C[t]
\end{equation}
and let $\mathcal{H}_{\a}$ be the set of families of polynomials
given by
\begin{equation}\label{calH_a}
\mathcal{H}_{\a} = \Big\{\big\{h_n(t)\big\}_{n \in \Z}  \,|\,
h_n(t)=\sum_{i=0}^{+\infty} \xi_ih_{n,i;\a}(t) \in \C[t], \xi_i \in
\C \Big\}.
\end{equation}
Note that $\xi_i$'s are independent of the choice of $n$. In
particular, we have
\begin{equation}
\mathcal{H}_0 = \Big\{\big\{nh(t)\big\}_{n \in \Z}  \,|\, h(t) \in \C[t]\Big\}.
\end{equation}

For any $\mathbf{h}=\big\{h_n(t)\big\} \in \mathcal{H}_{\a}$, denote
by $\Omega(\l,\a,\mathbf{h})=\C[t,s]$ the polynomial algebra over
$\C$. We define the action of $\W$ on $\Omega(\l,\a,\mathbf{h})$ as
follows
\begin{equation} \label{actCW}
C_1\big(f(t,s)\big)=C_2\big(f(t,s)\big)=0, \;\; W_m\big(f(t,s)\big)
= \l^m(t-m\a)f(t,s-m)
\end{equation}
and
\begin{equation} \label{actL}
L_m\big(f(t,s)\big) = \l^m(s+h_m(t))f(t,s-m) -m \l^m
(t-m\a)\frac{\partial}{\partial t}\big(f(t,s-m)\big).
\end{equation}
Then we have

\begin{prop} \label{construction}
$\Omega(\l,\a,\mathbf{h})$ is a $\W$-module for any $\l \in \C^*$,
$\a \in \C$ and $\mathbf{h}=\big\{h_n(t)\big\} \in \mathcal{H}_\a$
under the actions of \eqref{actCW} and \eqref{actL} and
$\Omega(\l,\a,\mathbf{h})$ is free of rank $1$ as a module over
$U(\C L_0 \oplus \C W_0)$. Moreover, we have that $\Omega(\l,
\a,\mathbf{h})$ is simple if and only if $\a \neq 0$.
\end{prop}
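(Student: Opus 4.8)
The plan is to establish, in order, that the formulas \eqref{actCW}--\eqref{actL} define a $\W$-action on $\C[t,s]$, that this action makes $\C[t,s]$ a free module of rank $1$ over $U(\C L_0\oplus\C W_0)$, and that the resulting module is simple precisely when $\a\neq0$. For the first point, since $C_1$ and $C_2$ act by zero it suffices to verify the operator identities $[W_m,W_n]=0$, $[L_m,W_n]=(n-m)W_{m+n}$ and $[L_m,L_n]=(n-m)L_{m+n}$ on $\C[t,s]$. Writing $\sigma_m$ for the substitution $f(t,s)\mapsto f(t,s-m)$, we have $W_m=\l^m(t-m\a)\sigma_m$ and $L_m=\l^m\big((s+h_m(t))-m(t-m\a)\p{t}\big)\sigma_m$; using that $\sigma_m$ fixes polynomials in $t$, commutes with $\p{t}$, and replaces $s$ by $s-m$, a direct expansion shows the first two relations hold \emph{for every} choice of the $h_m$'s --- in $[L_m,W_n]$ the $\p{t}$-terms cancel and one is left with $\l^{m+n}\big(n(t-n\a)-m(t-m\a)\big)\sigma_{m+n}=(n-m)W_{m+n}$. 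Expanding $[L_m,L_n]$ in the same way, the second-derivative terms cancel, the first-derivative terms agree after a short computation, and comparing the terms free of $\p{t}$ reduces everything to the single polynomial identity
\[
 n\,h_n(t)-m\,h_m(t)-m(t-m\a)\,h_n'(t)+n(t-n\a)\,h_m'(t)=(n-m)\,h_{m+n}(t), \qquad m,n\in\Z.
\]

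Thus the module claim is equivalent to checking this identity for every family $\mathbf h=\{h_n\}\in\mathcal H_\a$. As both sides are linear in $\mathbf h$ and $\mathcal H_\a$ is spanned by the families $\{h_{n,k;\a}\}_{n\in\Z}$, it is enough to take $h_n=h_{n,k;\a}=nt^k-n(n-1)\a\frac{t^k-\a^k}{t-\a}$ and compute, using the elementary identity $(t-\a)\frac{d}{dt}\big(\frac{t^k-\a^k}{t-\a}\big)=kt^{k-1}-\frac{t^k-\a^k}{t-\a}$: after cancellation the left-hand side collapses to $(n-m)(m+n)\big(t^k-(m+n-1)\a\,\frac{t^k-\a^k}{t-\a}\big)=(n-m)\,h_{m+n,k;\a}(t)$, as needed. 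Freeness over $U(\C L_0\oplus\C W_0)$ is then immediate: since $h_{0,k;\a}=0$ we have $h_0\equiv0$, so $W_0$ and $L_0$ act as multiplication by $t$ and by $s$ respectively; as $[L_0,W_0]=0$ we get $U(\C L_0\oplus\C W_0)=\C[L_0,W_0]$, and $L_0^iW_0^j\cdot 1=s^it^j$ runs over a $\C$-basis of $\C[t,s]$, so $\Omega(\l,\a,\mathbf h)=U(\C L_0\oplus\C W_0)\cdot 1$ freely.

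For simplicity, if $\a=0$ then every $W_m$ and every $L_m$ maps $t\,\C[t,s]$ into itself, so this is a proper nonzero submodule and $\Omega(\l,0,\mathbf h)$ is not simple. Assume now $\a\neq0$ and let $0\neq N$ be a submodule. Stability of $N$ under multiplication by $s$ (the action of $L_0$) and by $t$ (the action of $W_0$) makes $N$ a nonzero ideal of $\C[t,s]$, and the identity
\[
 (t-m\a)\,L_mf-(s+h_m(t))\,W_mf=-m\,\l^m(t-m\a)^{2}\,\p{t}\big(f(t,s-m)\big)
\]
shows that $(t-m\a)^{2}\,\p{t}\big(f(t,s-m)\big)\in N$ for all $f\in N$ and $m\in\Z^*$. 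Suppose $N\neq\C[t,s]$, so $V(N)\neq\emptyset$. Since $\a\neq0$, applying $W_m$ shows that whenever $(t_0,s_0)\in V(N)$ one has $(t_0,s_0-m)\in V(N)$ for all but at most one $m\in\Z$; hence the whole line $\{t=t_0\}$ lies in $V(N)$, so $V(N)$ is a finite nonempty union of vertical lines and $\sqrt N=\big(q(t)\big)$ for some non-constant squarefree $q\in\C[t]$. Combining the displayed identity with $\sqrt N=(q)$ and differentiating a factorization $f=q^{k}g$ (using $\gcd(q,q')=1$ and characteristic $0$), one shows inductively, with $N\subseteq\sqrt N=(q)$ as base case, that $N\subseteq(q^{k})$ for every $k\geq1$, which forces $N=0$ --- a contradiction. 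Therefore $N=\C[t,s]$ and $\Omega(\l,\a,\mathbf h)$ is simple.

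The module-axiom verification is routine but heavy on bookkeeping, and what keeps it manageable is the reduction, via linearity, to the single scalar identity on the generators $h_{n,k;\a}$. The genuinely delicate step is the $\a\neq0$ direction of simplicity: one must locate inside the Lie action the differentiation operator $(t-m\a)^{2}\p{t}\sigma_m$ that preserves $N$, and then run the Nullstellensatz-plus-descent argument that squeezes $N$ into $\bigcap_{k}(q^{k})=0$.
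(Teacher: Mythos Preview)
Your proof is correct. The verification that $\Omega(\lambda,\alpha,\mathbf h)$ is a $\mathcal W$-module, and the freeness over $U(\C L_0\oplus\C W_0)$, proceed along the same lines as in the paper (direct expansion of $L_mL_n$, $L_mW_n$, $W_mW_n$); your reduction of the $[L_m,L_n]$-relation to the single linear identity in the $h_n$'s and then to the generators $h_{n,k;\alpha}$ is a slightly cleaner packaging of the same computation the paper carries out.

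The simplicity argument for $\alpha\neq 0$, however, is genuinely different. The paper exploits the quadratic identity
\[
(2W_0W_m-W_{m+1}W_{-1}-W_{m-1}W_1)\,f(t,s)=2\lambda^m\alpha^2\,f(t,s-m),
\]
which shows that any submodule $N$ is closed under integer $s$-shifts; a minimal $\deg_s$ argument then gives a nonzero $f(t)\in N\cap\C[t]$, and since $(t-m\alpha)f'(t)\in N$ for two distinct $m$ one gets $f'(t)\in N$ and hence $1\in N$. Your route instead observes that $N$ is an ideal of $\C[t,s]$, extracts the differential operator $(t-m\alpha)^2\partial_t\sigma_m$ preserving $N$, and runs a Nullstellensatz descent $N\subseteq(q^k)$ for all $k$. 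The paper's argument is entirely elementary and shorter; your argument is more conceptual and highlights the ideal structure, but relies on the Nullstellensatz and a squarefree/coprimality step. Both are valid, and each would work in contexts where the other might be less natural (e.g.\ your approach generalises more readily when no simple ``shift-extracting'' combination of operators is available).
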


\begin{proof}
For any $m,n \in \Z$, we have that
\begin{equation*}
\begin{split}
L_mL_n\big(f(t,s)\big) = & L_m\Big(\l^n(s+h_n(t))f(t,s-n)-n\l^n(t-n\a)\frac{\partial}{\partial t} f(t,s-n)\Big) \\
= & \l^{m+n}(s+h_m(t))(s-m+h_n(t))f(t,s-m-n) \\
  & -n\l^{m+n}(s+h_m(t))(t-n\a)\frac{\partial}{\partial t} f(t,s-m-n)\\
  & - m\l^{m+n}(t-m\a)f(t,s-m-n)\frac{\partial}{\partial t}h_n(t)\\
  & -m\l^{m+n}(t-m\a)(s-m+h_n(t))\frac{\partial}{\partial t}f(t,s-m-n)\\
  & + mn\l^{m+n}(t-m\a)\frac{\partial}{\partial t} f(t,s-m-n)\\
  & + mn\l^{m+n}(t-m\a)(t-n\a)\frac{\partial^2}{\partial^2 t} f(t,s-m-n),\\
\end{split}
\end{equation*}
\begin{equation*}
\begin{split}
L_mW_n\big(f(t,s)\big) = & L_m\big(\l^n(t-n\a)f(t,s-n)\big) \\
= &  \l^{m+n} (t-n\a)(s+h_m(t))f(t,s-m-n) -m  \l^{m+n} (t-m\a)  f(t,s-m-n)  \\
& -m  \l^{m+n} (t-m\a)(t-n\a) \frac{\partial}{\partial
t}\big(f(t,s-m-n)\big)
\end{split}
\end{equation*}
and
\begin{equation*}
\begin{split}
W_nL_m\big(f(t,s)\big) = & W_n\Big(\l^m(s+h_m(t))f(t,s-m) -m \l^m (t-m\a) \frac{\partial}{\partial t}\big(f(t,s-m)\big)\Big)\\
= & \l^{m+n}(t-n\a)(s-n+h_m(t))f(t,s-m-n) \\
& -m \l^{m+n} (t-n\a) (t-m\a) \frac{\partial}{\partial
t}\big(f(t,s-m-n)\big).
\end{split}
\end{equation*}
Hence for any $m,n\in\Z$, we have

\begin{equation*}
\begin{split}
[L_m,L_n]\big(f(t,s)\big) = & \l^{m+n}\Big((s+h_m(t))(s-m+h_n(t))-(s+h_n(t))(s-n+h_m(t))\Big)f(t,s-m-n) \\
   & + \l^{m+n}\Big(n(t-n\a)\frac{\partial}{\partial t}h_m(t)-m(t-m\a)\frac{\partial}{\partial t}h_n(t)\Big)f(t,s-m-n)\\
   & +\l^{m+n}\Big(m(s+h_n(t))(t-m\a)-n(s+h_m(t))(t-n\a)\Big)\frac{\partial}{\partial t} f(t,s-m-n)\\
   & + \l^{m+n}\Big(n(t-n\a)(s-n+h_m(t))-m(t-m\a)(s-m+h_n(t))\Big)\frac{\partial}{\partial t}f(t,s-m-n)\\
   & + mn\l^{m+n}\Big((t-m\a)-(t-n\a)\Big)\frac{\partial}{\partial t} f(t,s-m-n)\\
=  & (n-m)\l^{m+n}\Big(s+(m+n)t^k-(m+n)(m+n-1)\a\frac{t^k-\a^k}{t-\a}\Big)f(t,s-m-n)\\
   & -(n-m)(m+n)\l^{m+n}(t-(m+n)\a)\frac{\partial}{\partial t} f(t,s-m-n)\\
=  & (n-m)L_{m+n}(f(t,s)),
\end{split}
\end{equation*}
where we use the following identity
\begin{equation*}
n(t-n\a)h'_{m}(t)-m(t-m\a)h'_n(t)=-(n-m)mn\a\frac{t^k-\a^k}{t-\a},\quad\
\forall\ m,n\in\Z.
\end{equation*}
Similarly, we can deduce that
\begin{equation*}
\begin{split}
[L_m,W_n]\big(f(t,s)\big)
= & n \l^{m+n}(t-n\a)f(t,s-m-n) -m  \l^{m+n} (t-m\a)  f(t,s-m-n) \\
= & (n-m)\l^{m+n}\big(t-(m+n)\a\big)f(t,s-m-n) \\
= & (n-m)W_{m+n}\big(f(t,s)\big).
\end{split}
\end{equation*}
Finally, for any $m,n \in \Z$, we have that
\begin{equation} \label{checkWW}
W_mW_n\big(f(t,s)\big)= \l^{m+n}(t-n\a)(t-m\a)f(t,s-m-n),
\end{equation}
hence $[W_m,W_n]\big(f(t,s)\big)=0$.

\setcounter{case}{0} For the simplicity, we have

\begin{case}
$\a\neq 0.$
\end{case}
Suppose that $N$ is a nonzero submodule of
$\Omega(\l,\a,\mathbf{h})$. Let $f(t,s)$ be a nonzero polynomial in
$N$ with the smallest $\deg_s\big(f(t,s)\big)$. By \eqref{checkWW},
we have
\begin{equation*}
\begin{split}
&(2W_0W_m-W_{m+1}W_{-1}-W_{m-1}W_1)\big(f(t,s)\big)=2\l^m
\a^2 f(t,s-m), \\
\end{split}
\end{equation*}
forcing $f(t,s-m) \in N$. This implies that
$\deg_s\big(f(t,s)\big)=0$, i.e., $f(t,s)\in \C[t]$. Denote
$f(t)=f(t,s)$. From the actions of $W_0$ and $L_0$, we see that
$f(t) \C[t,s] \subseteq N$. Hence by \eqref{actL} we get
$(t-m\a)f'(t) \in N$ for all $m \in \Z^*$. This immediately gives
that $N=\C[t,s]$, i.e., $\Omega(\l,\a,\mathbf{h})$ is simple.

\begin{case}
$\a=0.$
\end{case}
It is clear that for any $i \in \Z_+$, $t^i\C[t,s]$ is a submodule
of $\Omega(\l,0,\mathbf{h})$ for all $\l\in \C^*$ and
$\mathbf{h}=\big\{nh(t)\big\}_{n \in \Z} \in \mathcal{H}_0$ with
$h(t)\in\C[t]$. 
Moreover, the quotient module $t^{i}\C[t,s]/t^{i+1}\C[t,s]$ is
simple if and only if $h(0) \neq i$. Indeed, for any $0 \neq
t^{i}g(s) \in t^{i}\C[t,s]/t^{i+1}\C[t,s]$, we have
$W_m\big(t^{i}g(s)\big)=0$ and
\begin{equation*}
\begin{split}
L_m\big(t^{i}g(s)\big) \equiv & \l^m (s+mh(0))t^{i}g(s-m) -\l^m mi t^{i} g(s-m)  \\
\equiv & \l^m(s+m(h(0)-i))t^{i}g(s-m),\quad\mod t^{i+1}\C[t,s].
\end{split}
\end{equation*}
Hence $t^{i}\C[t,s]/t^{i+1}\C[t,s] \cong \Omega_{\W}(\l,i-h(0))$ as
$\W$-modules.
\end{proof}

The following is our main result of this subsection:

\begin{thm}Let $M$ be a $U(\W)$-module such that the restriction to $U(\C L_0 \oplus \C W_0)$ is free of rank $1$.
Then $M \cong \Omega(\l,\a,\mathbf{h})$ for some $\a\in \C$,
$\lambda\in \C^*$ and $\mathbf{h}=\big\{h_n(t)\big\} \in
\mathcal{H}_\a$.
\end{thm}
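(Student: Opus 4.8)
The plan is to follow the pattern of the two preceding theorems, now over the larger commutative subalgebra $U(\C L_0\oplus\C W_0)\cong\C[L_0,W_0]$. First I would fix an identification $M\cong\C[s,t]$ of $\C[L_0,W_0]$-modules under which $L_0$ acts as multiplication by $s$, $W_0$ as multiplication by $t$, and the free generator $v$ corresponds to $1$; the goal is to show that the rest of the action is then forced to be \eqref{actCW}--\eqref{actL}. Since $C_1,C_2$ are central they commute with $L_0$ and $W_0$, hence act by multiplication by polynomials $c_1(s,t),c_2(s,t)$, which will be forced to vanish at the end.

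The key reduction uses only the cheap brackets $[L_0,L_m]=mL_m$, $[L_0,W_m]=mW_m$, $[W_0,L_m]=mW_m$, $[W_0,W_m]=0$ together with $M=U(\W)v$, the PBW theorem, and the identity $L_mW_0^k=W_0^kL_m-mkW_0^{k-1}W_m$. Writing $Q_m:=W_m(1)$ and $R_m:=L_m(1)$, commuting $W_m$ and $L_m$ past $L_0,W_0$ gives $W_m(f)(s,t)=Q_m(s,t)f(s-m,t)$ and $L_m(f)(s,t)=R_m(s,t)f(s-m,t)-mQ_m(s,t)(\partial_t f)(s-m,t)$, with $Q_0=t$, $R_0=s$. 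So the whole module is encoded by $\{Q_m\}$, $\{R_m\}$, $c_1$, $c_2$, and everything else comes from the remaining defining relations. Before using them I would dispose of the degenerate case: if $W_{m_0}=0$ for some $m_0\neq 0$, then $[W_{m_0},L_n]$ acts as $0$, forcing $(n-m_0)W_{n+m_0}=0$ for all $n\neq-m_0$, hence $W_k=0$ for all $k\notin\{0,2m_0\}$, in particular $W_1=0$ or $W_{-1}=0$; but $[L_{\mp1},W_{\pm1}]=\pm 2W_0$ acts as the nonzero operator $\pm2\,t\cdot$, a contradiction. Thus $Q_m\neq0$ for every $m$.

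Substituting the two formulas into $[W_m,W_n]=0$, $[L_m,W_n]=(n-m)W_{m+n}+\tfrac{m^3-m}{12}\delta_{m+n,0}C_2$ and $[L_m,L_n]=(n-m)L_{m+n}+\tfrac{m^3-m}{12}\delta_{m+n,0}C_1$ turns them into polynomial identities in $s,t$. A comparison of the two top coefficients in $s$ in the $WW$-identity gives $m\deg_sQ_n=n\deg_sQ_m$ for all $m,n$, which (since $m$ runs over all of $\Z$ and $\deg_sQ_m\ge 0$) forces $\deg_sQ_m=0$, i.e. each $Q_m$ depends only on $t$; the $LW$-identity then forces each $R_m$ to be affine in $s$, say $R_m(s,t)=\rho_m(t)s+\sigma_m(t)$ with $\rho_0=1$, $\sigma_0=0$. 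Now exactly the recursion analysis used in the proofs of the two preceding theorems --- exploiting the $m=\pm1$ instances of the $LW$-identity, which are free of $C_2$ --- pins down $\rho_m(t)=\l^m$, $Q_m(t)=\l^m(t-m\a)$ for some $\l\in\C^*$, $\a\in\C$, and $c_2=0$. Feeding this back, the $LL$-identity has its $s^2$- and $s^1$-parts satisfied automatically, while its $s^0$-part reads, after writing $\sigma_m(t)=\l^m h_m(t)$ (so $h_0=0$): $(n-m)h_{m+n}=nh_n-mh_m+n(t-n\a)h_m'-m(t-m\a)h_n'$ for $m+n\neq0$, together with $-m(h_m+h_{-m})-m(t-m\a)h_{-m}'-m(t+m\a)h_m'=\tfrac{m^3-m}{12}c_1(t)$, which also shows $c_1$ depends only on $t$.

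The heart of the argument, and the step I expect to be the main obstacle, is to identify the solution set of this last system with $\mathcal H_\a$. I would show that the whole family $\{h_m\}$ is determined by $h_1\in\C[t]$ alone: the $m=1$ instance of the second relation is a first-order linear ODE $(t-\a)h_{-1}'+h_{-1}=-(h_1+(t+\a)h_1')$ whose only polynomial solution is unique, so $h_{-1}$ is determined; the $(m,n)=(2,-1)$ instance of the first relation is then a first-order linear ODE determining $h_2$; and the $m=\pm1$ instances of the first relation propagate uniquely to all $h_n$ with $|n|\ge2$. Since $h_{1,i;\a}(t)=t^i$, writing $h_1=\sum_i\xi_i t^i$ and invoking the identity already verified in Proposition \ref{construction} --- that $\big\{\sum_i\xi_i h_{m,i;\a}(t)\big\}_m$ solves the first relation --- uniqueness gives $h_m=\sum_i\xi_i h_{m,i;\a}$, i.e. $\mathbf h=\{h_m\}\in\mathcal H_\a$; applying the same uniqueness to the second relation forces $c_1=0$. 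The identity map $f\mapsto f$ is then the desired $\W$-isomorphism $M\cong\Omega(\l,\a,\mathbf h)$. All the real work is in the coupled bookkeeping of the three families of polynomial identities --- in particular extracting $Q_m=\l^m(t-m\a)$ before the $R_m$ are known, and the uniqueness of the $h$-recursion; the rest is routine and parallels the earlier proofs.
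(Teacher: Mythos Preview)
Your overall plan is sound and tracks the paper closely through the reduction to the families $\{Q_m\},\{R_m\}$, the elimination of the degenerate case, and the elimination of $s$-dependence in $Q_m$ via $[W_m,W_n]=0$. There is, however, one genuine gap: the step where you assert that ``exactly the recursion analysis used in the proofs of the two preceding theorems'' pins down $Q_m(t)=\lambda^m(t-m\alpha)$. In those theorems the unknowns $a_m$ were \emph{scalars}, so a two-term recursion from the $m=\pm1$ relations sufficed. Here each $Q_m$ is an unknown polynomial in $t$; the $m=\pm1$ instances of the $LW$-identity together with $\rho_m=Q_m'$ (from $m=n$) only yield $(Q_1Q_{-1})'=2t$ and recursions expressing $Q_{n\pm1}$ in terms of $Q_n$ and $Q_{\pm1}$; they do \emph{not} force $\deg_tQ_1=1$. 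The paper supplies a separate degree argument (its Claim~\ref{deg-a_m}): from the $m=-n$ instance one gets $\deg(Q_nQ_{-n})=2$, hence $\deg Q_n\le 2$, and then a short combinatorial analysis of $(n-m)Q_{m+n}=nQ_m'Q_n-mQ_mQ_n'$ rules out $\deg Q_m\in\{0,2\}$ for every $m\ne0$. Only after $\deg Q_m=1$ is established does the $m=\pm1$ recursion work as you describe.

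For the determination of $\{h_n\}$ your route is genuinely different from the paper's and is correct. The paper bounds the degrees of all $h_n$ uniformly, shows the top coefficients satisfy $f_n=nf_1$, peels off a multiple of $h_{n,k;\alpha}$, and iterates downward in degree until the constants fall into Case~\ref{k=0}, which also kills $c_1$. You instead observe that a few instances of the relations are first-order linear ODEs with unique polynomial solutions, so the whole family is determined by $h_1$; since $h_{1,i;\alpha}(t)=t^i$ span $\C[t]$ and Proposition~\ref{construction} exhibits $\{\sum_i\xi_ih_{n,i;\alpha}\}$ as solutions, uniqueness identifies $\{h_n\}$ with an element of $\mathcal H_\alpha$ and then forces $c_1=0$. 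Your argument is slicker and avoids the coefficient bookkeeping of the paper's two cases; the paper's argument is more self-contained in that it constructs the $\xi_i$ explicitly rather than invoking a uniqueness principle. One small correction: your propagation scheme needs, besides the $m=\pm1$ instances, one more ODE (for example the $(m,n)=(-2,1)$ instance of the first relation) to produce $h_{-2}$ before the $m=-1$ recursion can run downward; the $m=\pm1$ instances alone do not reach $h_{-2}$.
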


\begin{proof}
Denote the action of $C_1$, $C_2$ by $c_1$, $c_2$, respectively.
Assume that $M=U(\C L_0\oplus \C W_0)v$ for $v \in M$. We divide the
proof into several claims.

\begin{clm}\label{claim-W_m(u)}
For any $u \in M$, $i \in \Z_+$ and $m \in \Z$, we have
\begin{equation} \label{relW}
W_m(W_0^iu) = W_0^iW_mu \text{ and } W_m(L_0^iu) = (L_0-m)^iW_mu.
\end{equation}
\end{clm}

\setcounter{case}{0}

\begin{clm}
$W_m(v) \neq 0$ for all $m \in \Z$.
\end{clm}

Suppose on the contrary that $W_m(v) = 0$ for some $m \in \Z^*$. By
\eqref{relW} we know that $W_m(M)=0$. Now for any $n \neq 0, 2m$, we
have
\begin{equation*}
W_nM=(2m-n)^{-1}[L_{n-m},W_m]M=0.
\end{equation*}
Moreover, we have
\begin{equation*}
W_{2m}M=(4m)^{-1}[L_{-m},W_{3m}]M=0.
\end{equation*}
Hence, for any $n \in \N$, we get
\begin{equation*}
-2nW_0v + \frac{n^3-n}{12}c_2v=[L_n,W_{-n}]v=0.
\end{equation*}
This implies that $W_0v=0$, a contradiction. This claim is true.

\begin{clm}
$W_m(v) =a_m(W_0)$ for some $a_m \in \C[W_0]$.
\end{clm}

Assume that
\begin{equation*}
W_m(v) 
= \sum_{i=0}^{k_m}b_{m,i} L_0^iv
\end{equation*}
where $b_{m,i}=b_{m,i}(W_0) \in \C[W_0]$ and $b_{m,k_m} \neq 0$. For
all $mn\leq 0$, we have
\begin{equation*}
\begin{split}
0= & [W_m,W_n](v)= W_mW_n(v)-W_nW_m(v)   \\
= & \left(\sum_{i=0}^{k_n}b_{n,i}(L_0-m)^i\right)\left(\sum_{i=0}^{k_m}b_{m,i}L_0^i\right)v-\left(\sum_{i=0}^{k_n}b_{n,i}L_0^i\right)\left(\sum_{i=0}^{k_m}b_{m,i}(L_0-n)^i\right)v  \\
= & b_{m,k_m}b_{n,k_n}(nk_m-mk_n)L_0^{k_m+k_n-1}v
+ \text{lower-degree terms w.r.t. } L_0^iv.
\end{split}
\end{equation*}
Hence $k_m=0$ for all $m\in\Z$, i.e., $W_m(v)=a_m(W_0)v$ with
$a_m(W_0) \in \C[W_0]$. In particular, $a_0(W_0)=W_0$.

\begin{clm}\label{claim-L_m(u)}
For any $u \in M$, $i \in \Z_+$ and $m \in \Z$, we have
\begin{equation} \label{relL}
L_m(L_0^iu) = (L_0-m)^iL_mu \text{ and } L_m(W_0^iu) = W_0^iL_mu-imW_0^{i-1}W_mu.
\end{equation}
\end{clm}

We just prove the second relation by induction on $i$.
If $i = 0$, there is nothing to prove.
Now assume the relation holds for $i$,
then we have
\begin{equation*}
\begin{split}
L_m(W_0^{i+1}u) = & [L_m,W_0]W_0^iu+ W_0\big(L_m(W_0^iu)\big) \\
= & -mW_mW_0^iu+W_0\big(W_0^iL_mu-imW_0^{i-1}W_mu\big)  \\
= & W_0^{i+1}L_mu-(i+1)mW_0^iW_mu.
\end{split}
\end{equation*}
Moreover, the second relation is equivalent to
\begin{equation}
L_m(f(W_0)u) = f(W_0)L_mu-mf'(W_0)W_mu\quad \forall\ f(W_0) \in
\C[W_0].
\end{equation}

\begin{clm}\label{deg-a_m}
$\deg(a_m(W_0))=1$ for all $m \in \Z$.
\end{clm}

Now assume $L_mv=g_m(L_0,W_0)v$ for some polynomial $g_m(L_0,W_0) \in \C[L_0,W_0]$.
Then the relation
\begin{equation} \label{relLW}
[L_m,W_n](v) =(n-m)W_{m+n}(v)+\delta_{m+n,0}\frac{m^3-m}{12}c_2v
\end{equation}
implies
\begin{equation*}
\begin{split}
\C[W_0]v \ni & (n-m)W_{m+n}(v)+\delta_{m+n,0}\frac{m^3-m}{12}c_2v  \\
= & L_m(a_n(W_0)v)-W_n(g_m(L_0,W_0)v) \\
= & a_n(W_0)L_mv-ma_n'(W_0)W_mv-g_m(L_0-n,W_0)a_n(W_0)v  \\
= & a_n(W_0)\big(g_m(L_0,W_0)-g_m(L_0-n,W_0)\big)v-ma_n'(W_0)a_m(W_0)v.
\end{split}
\end{equation*}
Now we get $\deg_{L_0}\big(g_m(L_0,W_0)-g_m(L_0-n,W_0)\big)=0$.
It follows that $\deg_{L_0}g_m(L_0,W_0) \leq 1$ for all $m \in \Z$, i.e.,
$g_m(L_0,W_0)=b_m(W_0)L_0+d_m(W_0)$ for some $b_m(W_0), d_m(W_0) \in \C[W_0]$.
Now \eqref{relLW} is equivalent to
\begin{equation} \label{relab}
na_n(W_0)b_m(W_0)-ma_n'(W_0)a_m(W_0) =(n-m)a_{m+n}(W_0) + \delta_{m+n,0}\frac{m^3-m}{12}c_2.
\end{equation}
Taking $m=n$ in \eqref{relab}, we have $b_m(W_0)=a_m'(W_0)$.
Now let $m=-n$ in in \eqref{relab},
then we can obtain $a_n(W_0)a_{-n}(W_0) = W_0^2 - \frac{n^2-1}{12}c_2W_0 +x_n$
for $n \in \Z^*$ and some $x_n \in \C$. This implies that $\deg(a_m(W_0)) \leq 2$ for all $m \in \Z$.

If there are $m \neq n \in \Z$ such that $\deg(a_m(W_0))=\deg(a_n(W_0))=0$ (it is clear that $m+n \neq 0$),
then
\begin{equation}
a_{m+n}(W_0)=(n-m)^{-1}\Big(na_n(W_0)a_m'(W_0)-ma_n'(W_0)a_m(W_0)\Big)=0,
\end{equation}
which is a contradiction.

If $\deg(a_m(W_0))=0$ for some $m \in \Z^*$, then $\deg(a_{2m}(W_0))=1$.
By taking $n=2m$ in \eqref{relab} we get
\begin{equation}
a_{3m}(W_0)=2a_{2m}(W_0)a_m'(W_0)-a_{2m}'(W_0)a_m(W_0)=-a_{2m}'(W_0)a_m(W_0),
\end{equation}
hence $\deg(a_{3m}(W_0))=0$, which is also a contradiction. Now we
have $\deg(a_m(W_0))=1$ for all $m \in \Z$. This claim follows.

\begin{clm}\label{claim-W_m(v)}
$a_m(W_0)=\l^m(W_0-m\a)$ for some $\l\in\C^*$ and $\a\in\C$; and
$c_2=0$.
\end{clm}

From Claim \ref{deg-a_m} we may assume $a_m(W_0)=a_{m,1}W_0+a_{m,0}$
for $a_{m,1},a_{m,0} \in \C$ with $a_{m,1}a_{-m,1} = 1$, $a_{0,1}=1$
and $a_{0,0}=0$. Hence $b_m(W_0)=a_m'(W_0)=a_{m,1}$.

The relation \eqref{relab} implies that
\begin{equation}  \label{am1}
a_{n,1}a_{m,1} =a_{m+n,1},\quad\forall\ m\neq n
\end{equation}
and
\begin{equation}  \label{am0}
na_{n,0}a_{m,1}-ma_{n,1}a_{m,0} =(n-m)a_{m+n,0}+ \delta_{m+n,0}\frac{m^3-m}{12}c_2.
\end{equation}
Then \eqref{am1} gives that $a_{m,1}=a_{1,1}^m$ for all $m \in \Z$.
Now \eqref{am0} gives that $a_{m,0} = m  a_{1,0}a_{1,1}^{m-1}$ and
$c_2=0$.
Set $\l=a_{1,1}$ and $\a=-a_{1,0}/a_{1,1}$, and this claim follows.

\begin{clm}\label{claim-L_m(v)}
$g_n(L_0, W_0)=\l^n(L_0+h_n(W_0)),$ where
$\{h_{n}(t)\}\in\cal{H}_\a$ (see \eqref{calH_a}); and $c_1=0$.
\end{clm}
The equation
\begin{equation}
[L_m,L_n](v) =(n-m)L_{m+n}(v)+\delta_{m+n,0}\frac{m^3-m}{12}c_1v
\end{equation}
is equivalent to
\begin{equation} \label{rela1}
nb_m d_n +nd'_m a_n -md_m b_n -ma_m d'_n =(n-m)d_{m+n}+\delta_{m+n,0}\frac{m^3-m}{12}c_1.
\end{equation}
In particular, by taking $m=-n$ and noticing that $d_0(W_0)=0$, we
have that
\begin{equation}
(d_{-n} a_n + d_na_{-n})' =\frac{1-n^2}{12}c_1.
\end{equation}
For convenience, we denote $F_m=\l^{-m} d_m(W_0)\in\C[W_0]$, then
\eqref{rela1} is equivalent to
\begin{equation}\label{rela2}
n F_n +n F'_m (W_0-n\a) -m F_m -m F'_n (W_0-m\a) =
(n-m)F_{m+n}+\delta_{m+n,0}\frac{m^3-m}{12}c_1.
\end{equation}
It is clear that
\begin{equation*}
\deg_{W_0} F_n \leq \max\{\deg_{W_0} F_{-2}, \deg_{W_0} F_{-1},
\deg_{W_0} F_0, \deg_{W_0} F_1, \deg_{W_0} F_2\}
\end{equation*}
for $n \in \Z$, i.e., the degrees of all $F_n, n\in\Z$ are bounded.
Choose $k\in\Z_+$ such that $\deg F_n\leq k$ for all $n\in\Z$. Let
$f_n$ be the coefficient of $W_0^k$ of $F_n(W_0)$.

%
%
%
%

Checking the coefficients of $W_0^k$ in \eqref{rela2}, we have
\begin{equation}\label{calculator}
(n-km)f_{n}+(kn-m)f_{m}=(n-m)f_{m+n}+\delta_{k,0}\delta_{m+n,0}\frac{m^3-m}{12}c_1.
\end{equation}
\begin{case}\label{k=0} $k=0.$
\end{case}
In this case, we have $$n f_n - m f_m=(n-m)f_{m+n}+
\delta_{m+n,0}\frac{m^3-m}{12}c_1.$$ If $m + n \neq 0$, we have
$(n-m)f_{m+n}=n f_n - m f_m$, hence $f_n=(n-1)f_2-(n-2)f_1$ for $n
\in \Z^*$. For $m =-n\neq 0$, noticing that $f_0=0$, we have
$$\frac{1-n^2}{12}c_1=(n-1)f_2-(n-2)f_1 + (-n-1)f_2-(-n-2)f_1.$$
We get $c_1=0$ and $f_2=2f_1$. Now we have $f_n=nf_1$.

\begin{case}\label{k=1} $k\geq 1.$
\end{case}
Taking $n=2$ and $m=1$ in \eqref{calculator}, we have
\begin{equation*}
f_{3}=(2-k) f_{2} +(2k-1) f_{1}.
\end{equation*}
Taking $n=3$ and $m=1$ in \eqref{calculator}, we have
\begin{equation*}
\begin{split}
2f_{4}= & (3-k) f_{3} +(3k-1) f_{1}  \\
= & (3-k)(2-k) f_{2} +(10k-2k^2-4) f_{1}
\end{split}
\end{equation*}
Taking $n=4$ and $m=1$ in \eqref{calculator}, we have
\begin{equation*}
\begin{split}
6f_{5}= & (4-k) 2f_{4} +2(4k-1) f_{1}  \\
= & (-k^3+9 k^2-26 k+24) f_{2} +2 (k^3-9 k^2+26 k-9) f_{1}
\end{split}
\end{equation*}
Taking $n=3$ and $m=2$ in \eqref{calculator}, we have
\begin{equation*}
\begin{split}
6f_{5}= & 6(3-2k) f_{3} + 6(3k-2) f_{2}  \\
= & 12 (k^2-2 k+2) f_{2} +6(8k-4 k^2-3) f_{1}
\end{split}
\end{equation*}
Calculating the difference of the last two equations we get
$f_{2} = 2 f_{1}$ and by induction $f_{n} = n f_{1}$ for all $n \in
\Z$.

From Proposition \ref{construction}, we see that the family of
polynomials in $W_0$
$$h_{n,k}=nW_0^k-n(n-1)\a\frac{W_0^k-\a^k}{W_0-\a}$$
satisfy the equation \eqref{rela2} with $c_1$ replaced with $0$.
Then there exists $\xi_k\in\C$ such that $\deg(f_n-\xi_k
h_{n,k})\leq k-1$ for all $n\in\Z$. Noticing that the polynomials
$f_n-\xi_k h_{n,k}$ again satisfy the equation \eqref{rela2}.
Replacing $f_n$ with $f_n-\xi_k h_{n,k}$ in the arguments of Case
\ref{k=1} and repeating this process, we may find
$\xi_1,\cdots,\xi_k\in\C$ such that the polynomials
$f_n-\sum_{i=1}^k\xi_ih_{n,i}$ are all constant and satisfy the
equation \eqref{rela2}. Replace $f_n$ with
$f_n-\sum_{i=1}^k\xi_ih_{n,i}$ in the arguments of Case \ref{k=0},
we have $c_1=0$ and
$$f_n-\sum_{i=1}^k\xi_ih_{n,i}=n(f_1-\sum_{i=1}^k\xi_ih_{1,i})=\xi_0h_{n,0}$$
for some $\xi_0\in\C$. As a result, we obtain that
$f_n=\sum_{i=0}^k\xi_ih_{n,i}$, as desired.

Finally, Claim \ref{claim-W_m(u)}, \ref{claim-L_m(u)},
\ref{claim-W_m(v)}, \ref{claim-L_m(v)} together indicate that
$M\cong \Omega(\l,\a,\mathbf{h})$, where
$\mathbf{h}=\{h_n(t)\}\in\mathcal{H}_{\a}$ is as given in Claim
\ref{claim-L_m(v)}. Since simplicity follows from Proposition
\ref{construction} directly, we have completed the proof.
\end{proof}

\begin{rem} We remark that the simple subquotients of the $\W$-module $\Omega_{\W}(\l,\a)$ and
$\Omega(\l,\a,\mathbf{h})$ are all non-weight modules and neither of
the actions of $L_m$ is locally nilpotent in these modules. Hence
they are new classes of modules over $\W$.
\end{rem}

\vskip1cm

%
%
%
\end{document}